\newcommand*{\MRref}[2]{ \href{http://www.ams.org/mathscinet-getitem?mr=#1}{MR \textbf{#1}}}
\renewcommand{\PrintDOI}[1]{\href{http://dx.doi.org/\detokenize{#1}}{doi: \detokenize{#1}}%
  \IfEmptyBibField{pages}{, (to appear in print)}{}}
\newcommand*{\KK}{\mathrm{KK}}
\newcommand*{\E}{\mathrm{E}}
\newcommand*{\KE}{\mathrm{KE}}
\newcommand*{\K}{\mathrm{K}}
\newcommand*{\Cst}{\mathrm{C}^*}
\newcommand*{\op}{\mathrm{op}}
\newcommand*{\norm}[1]{\lVert #1\rVert} 
\newcommand*{\Cont}{\mathrm C} 
\newcommand*{\Hils}{\mathcal H}
\newcommand*{\Hilm}{\mathcal E}
\newcommand*{\ID}{\mathrm{id}}
\newcommand*{\hot}{\mathbin{\hat\otimes}}
\newcommand*{\R}{\mathbb R}
\newcommand*{\Z}{\mathbb Z}
\newcommand*{\N}{\mathbb N}
\newcommand*{\Bound}{\mathcal L}
\newcommand*{\Comp}{\mathcal K} 
\newcommand*{\Null}{\mathcal I} 
\newcommand*{\Pcomp}{\mathcal C}
\newcommand*{\nb}{\nobreakdash}  
\newcommand*{\Star}{\texorpdfstring{$^*$\nb-}{*-}}
\newcommand*{\defeq}{\mathrel{\vcentcolon=}}
\numberwithin{equation}{section}
\theoremstyle{plain}
\newtheorem{theorem}[equation]{Theorem}
\newtheorem{proposition}[equation]{Proposition}
\newtheorem{lemma}[equation]{Lemma}
\theoremstyle{definition}
\theoremstyle{remark}
\newtheorem{remark}[equation]{Remark}
\begin{document}

\title{Comparison of KE-Theory and KK-Theory}

\author{Ralf Meyer}
\address{Mathematisches Institut\\
         Georg-August-Universit\"at G\"ottingen\\
         Bunsenstr.\ 3--5\\
         37073 G\"ottingen\\
         Germany
}
\email{rmeyer2@uni-goettingen.de}

\subjclass[2000]{19K35}

\begin{abstract}
  We show that the character from the bivariant \(\K\)\nb-theory
  \(\KE^G\) introduced by Dumitra\c{s}cu to \(\E^G\) factors through
  Kasparov's \(\KK^G\) for any locally compact group~\(G\).  Hence
  \(\KE^G\) contains \(\KK^G\) as a direct summand.
\end{abstract}

\maketitle

\section{Introduction}
\label{sec:intro}

\(\K\)\nb-theory may be generalised in several ways to a bivariant
theory.  One such bivariant \(\K\)\nb-theory is Kasparov's \(\KK\)
(see~\cite{Kasparov:Operator_K}), another is the \(\E\)\nb-theory of
Connes and Higson~\cite{Connes-Higson:Deformations}.  Both theories
have equivariant versions with respect to second-countable locally
compact groups (see \cites{Kasparov:Novikov,
  Guentner-Higson-Trout:Equivariant_E}).  These theories are related
by a natural transformation \(\KK^G(A,B)\to\E^G(A,B)\) because of the
universal property of~\(\KK^G\).

Dumitra\c{s}cu defines another equivariant bivariant \(\K\)\nb-theory
\(\KE^G(A,B)\) in his thesis~\cite{Dumitrascu:Thesis}, which has the
same formal properties as \(\KK^G\) and~\(\E^G\); in particular, it
has an analogue of the Kasparov product and the exterior product.  He
also constructs an explicit natural transformation
\[
\KK^G(A,B)\to\KE^G(A,B)\to\E^G(A,B).
\]
Hence this also makes the transformation \(\KK^G\to\E^G\) explicit.

The construction of a new bivariant \(\K\)\nb-theory is always laden
with technical difficulties, especially the construction of a product.
\(\KK\)\nb-theory and \(\E\)\nb-theory involve different
technicalities, and \(\KE\)\nb-theory needs a share of both kinds of
technicalities.  When I was asked to referee the
article~\cite{Dumitrascu:KE} by Dumitra\c{s}cu, I therefore wanted to
clarify whether~\(\KE^G\) is really a new theory or equivalent to
\(\KK^G\) or~\(\E^G\).  I expected~\(\KE^G\) to be equivalent to
either \(\KK^G\) or~\(\E^G\).  I came up quickly with a sketch of an
argument why~\(\KE^G\) should be equivalent to~\(\KK^G\), which I
communicated to Dumitra\c{s}cu, asking him whether he could complete
this sketch to a full proof.  After a while it became clear that I had
to complete this argument myself, which resulted in this note.  Its
purpose is the following theorem:

\begin{theorem}
  \label{the:KE_equals_KK}
  Let~\(G\) be a second countable locally compact group and let
  \(A\) and~\(B\) be separable \(G\)\nb-\(\Cst\)-algebras.  The
  natural map \(\KE^G(A,B)\to\E^G(A,B)\) factors through a map
  \(\KE^G(A,B)\to\KK^G(A,B)\).
\end{theorem}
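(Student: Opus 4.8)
The plan is to manufacture, from a cycle for \(\KE^G(A,B)\), a genuine Kasparov cycle, and to organise this into a natural homomorphism \(f\colon\KE^G(A,B)\to\KK^G(A,B)\) whose composite with the Connes--Higson comparison \(c\colon\KK^G(A,B)\to\E^G(A,B)\) is the given transformation \(\pi\colon\KE^G(A,B)\to\E^G(A,B)\). Recall that a cycle for \(\KE^G(A,B)\) is (in the model I have in mind; the exact definition will not be essential below) a countably generated \(\Z/2\)\nb-graded Hilbert \(G\)\nb-\(B\)\nb-module~\(\Hilm\), a \(G\)\nb-equivariant graded \(*\)\nb-homomorphism \(\varphi\colon A\to\Bound(\Hilm)\), and a norm-continuous, uniformly bounded family \((F_t)_{t\ge1}\) of odd operators on~\(\Hilm\) whose Kasparov defects \((F_t^2-1)\varphi(a)\), \((F_t-F_t^*)\varphi(a)\), \([F_t,\varphi(a)]\) and whose equivariance defects \((gF_tg^{-1}-F_t)\varphi(a)\) tend to~\(0\) in norm as \(t\to\infty\), locally uniformly in \(a\in A\) and \(g\in G\); the transformation \(\pi\) retains only the asymptotic morphism determined by \((F_t)\), while \(\KK^G\to\KE^G\) regards a Kasparov cycle \((\Hilm,\varphi,F)\) as the constant family.

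To build~\(f\), I would replace the coefficient algebra by the auxiliary \(G\)\nb-\(\Cst\)-algebra \(\mathfrak B\subseteq\Cont_b\bigl([1,\infty),B\bigr)\) of \(G\)\nb-continuous bounded functions. Every evaluation \(\mathrm{ev}_s\colon\mathfrak B\to B\) is \(G\)\nb-equivariant, and any two of them are \(G\)\nb-homotopic (via the evaluations along the straight-line path between the two parameters), so together they induce a single map \(\KK^G(A,\mathfrak B)\to\KK^G(A,B)\). The idea is that the family \((F_t)\), packaged as pointwise multiplication on the Hilbert \(\mathfrak B\)\nb-module \(\Cont_b\bigl([1,\infty),\Hilm\bigr)_G\), together with the constant representation of~\(A\), should define a Kasparov \(G\)\nb-\(A\)\nb-\(\mathfrak B\)\nb-module: a norm-continuous family of operators whose defects are pointwise compact and tend to~\(0\) in norm is automatically a family of uniformly compact operators, hence compact over~\(\mathfrak B\). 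Pushing the resulting class in \(\KK^G(A,\mathfrak B)\) through the evaluation map produces the candidate \(f\bigl([\Hilm,\varphi,(F_t)]\bigr)\). One then checks that \(f\) is well defined on homotopy classes and additive---a homotopy of \(\KE^G\)\nb-cycles over~\(B\) being a \(\KE^G\)\nb-cycle over \(\Cont([0,1],B)\), to which the construction applies by naturality in the second variable---that \(f\) is natural in~\(A\), that \(f\) composed with the inclusion of constant families is the identity on \(\KK^G(A,B)\) (for a constant family the \(\mathfrak B\)\nb-module above is the pullback of \((\Hilm,\varphi,F)\) along \(B\to\mathfrak B\), which any evaluation returns to \((\Hilm,\varphi,F)\)), and that \(c\circ f=\pi\), which unwinds to identifying the Connes--Higson asymptotic morphism of the \(\mathfrak B\)\nb-module, read off along the evaluations, with the asymptotic morphism attached to \((F_t)\).

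The main obstacle is exactly the claim that \((F_t)\) can be promoted to a genuine Kasparov cycle over \(\mathfrak B\). The asymptotic vanishing of the defects makes the packaged operator Kasparov only ``up to \(\Cont_0\bigl([1,\infty),\Comp(\Hilm)\bigr)\)'', and, worse, the equivariance defect need not be pointwise compact, so \(\mathfrak B\)\nb-compactness is not formal. Bridging this is where the standing hypotheses are used: by a Kasparov-technical-theorem argument one produces a countable approximate unit in \(\Comp(\Hilm)\) that is quasicentral for \(\varphi(A)\), for the \(G\)\nb-action in Kasparov's sense, and for the family \((F_t)\) simultaneously, and with it one enlarges \((\Hilm,\varphi)\) by a degenerate summand and perturbs and reparametrises \((F_t)\), within its \(\KE^G\)\nb-homotopy class, into a family that genuinely extends to an adjointable operator with compact defects over \(\mathfrak B\)---equivalently, over \(\Cont\bigl([1,\infty],B\bigr)\). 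Separability of \(A\) and~\(B\) and second countability of \(G\) are what make such a countable, uniformly adapted approximate unit available. One then verifies that this surgery changes neither the resulting \(\KK^G\)\nb-class nor the \(\E^G\)\nb-class that \(c\) assigns to it, so that \(f\) is a well-defined homomorphism with \(c\circ f=\pi\); the checks in the previous paragraph are then routine.

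Finally, the direct-summand statement follows formally: \(f\) composed with \(\KK^G(A,B)\to\KE^G(A,B)\) is a natural, additive endotransformation of \(\KK^G\) that is compatible with Kasparov products and sends the class of every \(*\)\nb-homomorphism to itself, and by the universal property of \(\KK^G\) such a transformation must be the identity; hence \(\KK^G(A,B)\) is a retract, and therefore a direct summand, of \(\KE^G(A,B)\).
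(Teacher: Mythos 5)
Your approach is genuinely different from the paper's, and it contains a gap that I do not think can be bridged without solving an open problem.

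The paper does \emph{not} attempt to turn a \(\KE^G\)\nb-cycle into an actual Kasparov module over any auxiliary coefficient algebra.  Instead, it first normalises the cycle (Lemmas \ref{lem:strengthen_KE_cycle} and \ref{lem:equivariant_KE_cycle}: \(F=F^*\), \(\norm F\le1\), \(1-F^2\in\Pcomp\), \(gF=F\), with underlying module \(\Hils_B L\); the last two require stabilising by \(\Comp(L^2G)\)).  It then builds, by hand, a sequence of completely positive, contractive, \(G\times\Z/2\)\nb-equivariant maps \(\xi_n\colon S\hot A\to\Pcomp(\Hils_B L)\) lifting the E-theory \Star{}homomorphism through the finite-rank approximations \(\Psi_n\) of \(\ID_S\) (Lemma \ref{lem:cp_am_graded}), reparametrises to obtain a single completely positive equivariant asymptotic morphism \(S\hot A\to\Comp(\Hils_B)\) in the correct E\nb-class (Lemma \ref{lem:asymptotically_mult}), and finally invokes Thomsen's description of \(\KK^G\) via completely positive equivariant asymptotic morphisms, passing through a cp-split extension and the resulting long exact sequence.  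At no point is a Kasparov \((A,B)\)\nb- or \((A,\mathfrak B)\)\nb-module produced.

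The gap in your plan is precisely the paragraph you call the ``main obstacle.''  To make \((F_t)\), packaged over \(\mathfrak B=\Cont_b([1,\infty),B)_G\), into a genuine Kasparov module, the commutators \([F_t,\varphi(a)]\) and the other defects would have to become \emph{pointwise} compact for every finite \(t\) (evaluation at \(t\) sends \(\Comp\) of the \(\mathfrak B\)\nb-module into \(\Comp(\Hilm)\)), whereas a \(\KE^G\)\nb-cycle only asserts that these operators tend to \(0\) in norm.  Remark~\ref{rem:KK_in_KE_even_better} of the paper states explicitly that it is unknown whether every \(\KE^G\)\nb-cycle is homotopic to one with \([F,\varphi(a)]\in\Comp(\Hilm)\); your proposed surgery, if it worked, would establish exactly this (indeed essentially \(\KE^G\cong\KK^G\)), which is strictly more than the theorem being proved.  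A quasicentral approximate unit in \(\Comp(\Hilm)\) that is quasicentral for \(\varphi(A)\), the \(G\)\nb-action, and \((F_t)\) lets you \emph{perturb} \(F_t\) by compacts and improve self-adjointness, contractivity and equivariance (this is what the paper's Lemmas \ref{lem:strengthen_KE_cycle} and \ref{lem:equivariant_KE_cycle} do, and it is also what Kasparov's technical theorem does in the \(\KK\) setting), but it gives no mechanism to make a non-compact commutator \([F_t,\varphi(a)]\) compact: compact perturbations of \(F_t\) leave the class of \([F_t,\varphi(a)]\) modulo \(\Comp(\Hilm)\) unchanged.  Also, the aside ``equivalently, over \(\Cont([1,\infty],B)\)'' is false: \(\Cont([1,\infty],B)\) is the algebra of functions \emph{with a limit at \(\infty\)}, a proper subalgebra of \(\Cont_b([1,\infty),B)\); producing a Kasparov module over \(\Cont([1,\infty],B)\) would amount to showing that \((F_t)\) is homotopic to a constant family, which again is the open question.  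The paper's detour through Thomsen's completely positive asymptotic picture is exactly what avoids having to solve it.
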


I still expect \(\KE^G(A,B)\cong\KK^G(A,B)\), but I do not know how to
prove this.  A transformation \(\KE^G\to\KK^G\) seems enough for
applications.  It shows that a computation in~\(\KE^G\) gives results
in~\(\KK^G\).  For instance, if~\(G\) has the analogue of a
\(\gamma\)\nb-element in~\(\KE^G\) or if \(\gamma=1\) in~\(\KE^G\),
then the same follows in~\(\KK^G\).

In Section~\ref{sec:def_KE}, we recall Dumitra\c{s}cu's definition of
cycles for \(\KE^G(A,B)\) and show that we may strengthen his
conditions slightly without changing the set of homotopy classes.  In
Section~\ref{sec:cp_am_from_KE}, we show how to get completely
positive equivariant asymptotic morphisms from the \(\KE^G\)-cycles
satisfying our stronger conditions.

\section{The definition of KE-theory}
\label{sec:def_KE}

Throughout this article, \(G\) is a second countable, locally compact
topological group; \(A\) and~\(B\) are separable \(\Cst\)\nb-algebras
with continuous actions of \(G\times\Z/2\).  An action of
\(G\times\Z/2\) is the same as a \(\Z/2\)\nb-grading together with an
action of~\(G\) by grading-preserving automorphisms; we will
frequently combine a \(\Z/2\)\nb-grading and a \(G\)\nb-action in this
way.

We first recall the definition of \(\KE^G(A,B)\).

Let \(L\defeq [1,\infty)\) and \(BL\defeq \Cont_0(L,B)\).  A
\emph{continuous field of \(G\times\Z/2\)-\((A,B)\)-modules} is a
countably generated, \(G\times\Z/2\)\nb-equivariant Hilbert
\(BL\)-module~\(\Hilm\) with a \(G\times\Z/2\)-equivariant
\Star{}homomorphism \(\varphi\colon A\to\Bound(\Hilm)\), where
\(\Bound(\Hilm)\) denotes the \(\Cst\)\nb-algebra of adjointable
operators on~\(\Hilm\) with its canonical action of \(G\times\Z/2\).

Using the evaluation homomorphisms \(BL\to B\), \(f\mapsto f(t)\), we
may view~\(\Hilm\) as a family of \(G\times\Z/2\)-equivariant Hilbert
\(B\)\nb-modules~\(\Hilm_t\); an operator \(x\in\Bound(\Hilm)\) is
completely determined by a family of operators
\(x_t\in\Bound(\Hilm_t)\).  Besides the ideal \(\Comp(\Hilm)\) of
compact operators on~\(\Hilm\), we need the two ideals
\begin{align*}
  \Pcomp(\Hilm) &\defeq \{x\in\Bound(\Hilm) \mid xf\in\Comp(\Hilm)
  \text{ for all } f\in\Cont_0(L)\},\\
  \Null(\Hilm) &\defeq \{x\in\Bound(\Hilm) \mid \lim_{t\to\infty} \norm{x_t}=0\}.
\end{align*}
We have \(\Pcomp(\Hilm)\cap\Null(\Hilm)=\Comp(\Hilm)\).

A \emph{cycle for \(\KE^G(A,B)\)} is a pair \((\Hilm,F)\),
where~\(\Hilm\) is a continuous field of
\(G\times\Z/2\)-\((A,B)\)-modules and~\(F\) is an odd, adjointable
operator on~\(\Hilm\) that satisfies the following conditions (for all
\(a\in A\), \(g\in G\)):
\begin{description}
\item[aKm1] \((F-F^*)\varphi(a)\in\Null(\Hilm)\) for all \(a\in A\);
\item[aKm2] \([F,\varphi(a)]\in\Null(\Hilm)\) for all \(a\in A\);
\item[aKm3] \(\varphi(a)(F^2-1)\varphi(a)^*\ge0\) modulo
  \(\Pcomp(\Hilm)+\Null(\Hilm)\) for all \(a\in A\);
\item[aKm4] \((gF-F)\varphi(a)\in\Null(\Hilm)\) for all \(a\in A\),
  \(g\in G\).
\end{description}

Later, we shall meet the following strengthenings of these conditions:
\begin{description}
\item[aKm1s] \(F=F^*\);
\item[aKm3s] \(\norm{F}\le1\) and \((1-F^2)\varphi(a)\in\Pcomp(\Hilm)\)
  for all \(a\in A\);
\item[aKm4s] \(gF=F\) for all \(g\in G\).
\end{description}

Cycles for \(\KE^G(A,\Cont([0,1],B))\) are called \emph{homotopies} of
cycles.  We define \(\KE^G(A,B)\) as the set of homotopy classes of
cycles for \(\KE^G(A,B)\).

\begin{lemma}
  \label{lem:strengthen_KE_cycle}
  Any cycle for \(\KE^G(A,B)\) is homotopic to one that satisfies
  \textup{(aKm1s)} and \textup{(aKm3s)}.  If two cycles satisfying
  \textup{(aKm1s)} and \textup{(aKm3s)} are homotopic, they are
  homotopic via a homotopy that satisfies \textup{(aKm1s)} and
  \textup{(aKm3s)}.
\end{lemma}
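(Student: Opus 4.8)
The plan is to change a given cycle in two steps — first arranging $F=F^*$, then pushing $F$ through a functional calculus to force $\norm F\le1$ and the $\Pcomp$\nb-condition — each step carried out by an explicit homotopy, i.e.\ an explicit cycle for $\KE^G\bigl(A,\Cont([0,1],B)\bigr)$. The second assertion will come out of the same construction applied to a homotopy, once one checks that the construction does nothing to a cycle already satisfying \textup{(aKm1s)} and \textup{(aKm3s)}, so that the two endpoints stay in place.

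\emph{Self\nb-adjointness.} Set $N\defeq\tfrac12(F-F^*)$. Condition \textup{(aKm1)} says $N\varphi(a)\in\Null(\Hilm)$ for all $a$, and since $N^*=-N$ and $\Null(\Hilm)$ is a closed \Star{}ideal of $\Bound(\Hilm)$ we also get $\varphi(a)N\in\Null(\Hilm)$. The straight line $u\mapsto F-uN$ ($u\in[0,1]$) is a path of odd adjointable operators and a cycle for $\KE^G\bigl(A,\Cont([0,1],B)\bigr)$: the corrections it makes to the left\nb-hand sides of \textup{(aKm1)}--\textup{(aKm4)} all lie in $\Null(\Hilm)$, uniformly in~$u$ — for \textup{(aKm3)} because every term of $(F-uN)^2-F^2$ has $N$ as a left or right factor, so $\varphi(a)\bigl((F-uN)^2-1\bigr)\varphi(a)^*\equiv\varphi(a)(F^2-1)\varphi(a)^*$ modulo $\Null(\Hilm)$. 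At $u=1$ the operator is $\tfrac12(F+F^*)$; so I may assume $F=F^*$.

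\emph{Functional calculus.} Put $f_u(x)\defeq(1-u)x+u\max\bigl(-1,\min(1,x)\bigr)$; this is an odd continuous function with $f_0(x)=x$ and $\norm{f_1(F)}\le1$. Consider the path $u\mapsto f_u(F)$ of odd self\nb-adjoint operators. Conditions \textup{(aKm2)} and \textup{(aKm4)} survive: for a polynomial $p$, expanding $[p(F),\varphi(a)]$ and $\bigl(p(gF)-p(F)\bigr)\varphi(a)$ and commuting $\varphi(a)$ past powers of $F$ exhibits them as finite sums of products, each summand either lying in $\Null(\Hilm)$ already or carrying a factor from $\{[F,\varphi(a)],\,(gF-F)\varphi(a)\}\subseteq\Null(\Hilm)$; as $\Null(\Hilm)$ is a closed ideal and $f_u$ is a uniform limit of polynomials on the spectrum of $F$, the relations carry over to $f_u(F)$. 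For \textup{(aKm3)} I would work in the quotient $C\defeq\Bound(\Hilm)\big/\bigl(\Pcomp(\Hilm)+\Null(\Hilm)\bigr)$, where by \textup{(aKm2)} the image of $\varphi(a)$ commutes with every continuous function of $F$: there the image of $\varphi(a)\bigl(f_u(F)^2-1\bigr)\varphi(a)^*$ equals that of $\varphi(a)\varphi(a)^*\bigl(f_u(F)^2-1\bigr)$, and its positivity follows from \textup{(aKm3)} for $F$ together with the pointwise facts that $f_u(x)^2\ge1$ for $\abs{x}\ge1$ and $f_u(x)=x$ for $\abs{x}\le1$ — these say that $f_u(F)^2-1$ has the same sign and the same negative part as $F^2-1$. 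Taking $u=1$ I may now also assume $\norm F\le1$. Since $F^2\le1$, the element $\varphi(a)(1-F^2)\varphi(a)^*$ is simultaneously $\ge0$ and, by \textup{(aKm3)}, $\le0$ modulo $\Pcomp(\Hilm)+\Null(\Hilm)$, hence lies in this ideal; so $(1-F^2)^{1/2}\varphi(a)^*$, and therefore $(1-F^2)\varphi(a)^*$, hence $(1-F^2)\varphi(a)$, lie in $\Pcomp(\Hilm)+\Null(\Hilm)$ for every~$a$.

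\emph{The main obstacle.} What is still missing is the upgrade from $(1-F^2)\varphi(a)\in\Pcomp(\Hilm)+\Null(\Hilm)$ to $(1-F^2)\varphi(a)\in\Pcomp(\Hilm)$, and this is where the real difficulty sits: the $\Null(\Hilm)$\nb-summand is a term that is $O(1)$ on compact subsets of $L=[1,\infty)$ but decays at infinity, and it must be absorbed without disturbing \textup{(aKm2)}--\textup{(aKm4)}. I would do this by a homotopy that reparametrises the half\nb-line — pulling $\Hilm$ back along a path of proper maps $L\to L$ which stretch an increasing initial segment of $L$ out to infinity — designed so that after multiplication by any $f\in\Cont_0(L)$ the $\Null(\Hilm)$\nb-part is carried into $\Comp(\Hilm)$, while the grading and the $G$\nb-action are preserved and the whole homotopy is constant whenever $(1-F^2)\varphi(a)\in\Pcomp(\Hilm)$ already holds. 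Granting this step, the first assertion follows. For the second, I apply all three steps to a homotopy between two cycles satisfying \textup{(aKm1s)} and \textup{(aKm3s)}, regarded as a cycle for $\KE^G\bigl(A,\Cont([0,1],B)\bigr)$: each step is constant on such cycles, so it fixes the two endpoints, and the last step produces a homotopy that again satisfies \textup{(aKm1s)} and \textup{(aKm3s)}. The bulk of the work — the reparametrisation above, and rerunning the first two steps with $B$ replaced by $\Cont([0,1],B)$ — is what I expect to be genuinely laborious.
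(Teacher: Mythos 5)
Your first two steps essentially reproduce the paper's argument: self\nb-adjointness by the perturbation $F\mapsto\frac12(F+F^*)$ (the paper cites the ``small perturbation'' lemma, you write out the linear path explicitly, which is the same thing), and then $\norm F\le1$ by the linear homotopy to $\chi(F)$ combined with the observation that $(\chi(F)^2-1)_-=(F^2-1)_-$. Your reformulation of \textup{(aKm3)} in the quotient $\Bound(\Hilm)/(\Pcomp+\Null)$ is also in line with the paper's reduction to $\varphi(a)(F^2-1)_-\varphi(a)^*\in\Pcomp+\Null$.

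The step you label the ``main obstacle'' is where the proposal breaks down, and the idea you sketch there cannot work. You want to upgrade $(1-F^2)\varphi(a)\in\Pcomp(\Hilm)+\Null(\Hilm)$ to $(1-F^2)\varphi(a)\in\Pcomp(\Hilm)$ by a homotopy that reparametrises $L=[1,\infty)$. But $\Null(\Hilm)$ consists of families $(x_t)$ with $\norm{x_t}\to0$ whose individual fibre operators $x_t\in\Bound(\Hilm_t)$ need not be compact, while $\Pcomp(\Hilm)$ requires each $x_t$ to be compact. Pulling back along a proper map $\rho\colon L\to L$ merely replaces $x_t$ by $x_{\rho(t)}$; it permutes fibres but can never turn a non\nb-compact fibre operator into a compact one. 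So no amount of ``stretching an initial segment out to infinity'' will make $f\cdot n$ compact for $n\in\Null\setminus\Pcomp$ and $f\in\Cont_0(L)$ with $f\ne0$ on the stretched segment. In fact this step is precisely where the real content of the lemma sits, and the paper handles it by an entirely different device: it doubles the module to $\Hilm_2=\Hilm\oplus\Hilm^{\op}$, lets $A$ act by $\varphi\oplus0$, and forms
\[
  F_{2s}=\begin{pmatrix} F & s\sqrt{1-u^2}\sqrt{1-F^2}\\ s\sqrt{1-F^2}\sqrt{1-u^2} & -F\end{pmatrix},
\]
where $u\in\Pcomp(\Hilm)$ is an even, $G$\nb-invariant\nb-up\nb-to\nb-$\Null$, $F$\nb- and $\varphi(A)$\nb-commuting\nb-up\nb-to\nb-$\Null$ operator (Dumitra\c{s}cu's Lemma~2.35) satisfying $(1-u^2)(\varphi(a)(F^2-1)\varphi(a)^*)_-\in\Null$. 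The homotopy $s\in[0,1]$ adds a degenerate cycle at $s=0$, and at $s=1$ the diagonal of $1-F_{21}^2$ is $(1-F^2)u^2\in\Pcomp$ and the off\nb-diagonal vanishes, which yields \textup{(aKm3s)}. This Cuntz\nb-type matrix trick with an approximately invariant $u\in\Pcomp$ is the missing ingredient; without it (or something equivalent) the proof is incomplete. Your plan for the second assertion (apply the same construction to a homotopy and check constancy on cycles already satisfying \textup{(aKm1s)} and \textup{(aKm3s)}) is the correct one and matches the paper, but it too depends on having a working third step.
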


We will treat condition~(aKm4s) below in
Lemma~\ref{lem:equivariant_KE_cycle}.

\begin{proof}
  Let \((\Hilm,F)\) be a cycle for \(\KE^G(A,B)\).  Then \((F+F^*)/2\)
  is a small perturbation of~\(F\) and hence gives a homotopic cycle
  (see \cite{Dumitrascu:KE}*{Corollary 2.25}) satisfying
  \(F=F^*\).

  Now assume \(F=F^*\) and~(aKm2); then
  \begin{multline*}
    \varphi(a)(1-F^2)_+ \varphi(a)^*\cdot \varphi(a)(1-F^2)_- \varphi(a)^*
    \\\equiv \varphi(a) \varphi(a)^* \varphi(a)(1-F^2)_+(1-F^2)_- \varphi(a)^*
    \equiv 0 \bmod \Null(\Hilm).
  \end{multline*}
  Hence \(\varphi(a) (F^2-1)_\pm \varphi(a)^*\) are the positive and
  negative parts of \(\varphi(a) (F^2-1)\varphi(a)^*\) in
  \(\Bound(\Hilm)/\Null(\Hilm)\).  As a result, (aKm3) is equivalent
  to \(\varphi(a)\cdot (F^2-1)_-\varphi(a)^*\in \Pcomp(\Hilm)+\Null(\Hilm)\)
  for all \(a\in A\).

  Define \(\chi\colon \R\to[-1,1]\) by \(\chi(t)\defeq -1\) for
  \(t\le-1\), \(\chi(t)\defeq t\) for \(-1\le t\le1\), and
  \(\chi(t)\defeq 1\) for \(t\ge1\).  Then \(\norm{\chi(F)}\le1\) and
  \(\chi(F)^2-1=(F^2-1)_-\).  The reformulation of~(aKm3) in the
  previous paragraph shows that
  \((\Hilm,\chi(F))\) is again a cycle for \(\KE^G(A,B)\) and that the
  linear path \((\Hilm,sF+(1-s)\chi(F))\) is a homotopy of cycles.
  Thus any cycle is homotopic to one with \(F=F^*\) and
  \(\norm{F}\le1\).

  Next we adapt the standard trick to achieve \(F^2=1\) for
  \(\KK\)-cycles.  Let \(\Hilm_2\defeq\Hilm\oplus\Hilm^\op\),
  where~\(\op\) denotes the opposite \(\Z/2\)\nb-grading.  Let~\(A\)
  act on~\(\Hilm_2\) by \(\varphi_2\defeq \varphi\oplus0\).  For
  \(s\in[0,1]\), let
  \[
  F_{2s} \defeq
  \begin{pmatrix}
    F&s\sqrt{1-u^2}\sqrt{1-F^2}\\
    s\sqrt{1-F^2}\sqrt{1-u^2}&-F
  \end{pmatrix},
  \]
  where \(u\in\Bound(\Hilm)^{(0)}\) is an even operator as in
  Lemma~\cite{Dumitrascu:KE}*{Lemma 2.35}; that is,
  \(u\in\Pcomp(\Hilm)\), \([u,F]\in\Null(\Hilm)\),
  \([u,\varphi(a)]\in\Null(\Hilm)\) for all \(a\in A\),
  \((1-u^2)(\varphi(a) (F^2-1)\varphi(a)^*)_-\in\Null(\Hilm)\) for all
  \(a\in A\), and \(gu-u\in\Null(\Hilm)\) for all \(g\in G\).  Since
  \(u\in\Pcomp(\Hilm)\) and
  \(\Pcomp(\Hilm)\cap\Null(\Hilm)=\Comp(\Hilm)\), we even have
  \([u,F]\in\Comp(\Hilm)\), \([u,\varphi(a)]\in\Comp(\Hilm)\) for all
  \(a\in A\), and \(gu-u\in\Comp(\Hilm)\) for all \(g\in G\).  Since
  we already achieved \(\norm{F}\le1\), we also have
  \((1-u^2)\varphi(a) (1-F^2)\varphi(a)^*\in\Null(\Hilm)\), hence
  \((1-u^2)\varphi(aa^*)(1-F^2)\in\Null(\Hilm)\).  This is equivalent
  to \((1-u^2)\varphi(a)(1-F^2)\in\Null(\Hilm)\) for all \(a\in A\)
  because elements of the form \(aa^*\) span~\(A\).

  The set of \(f\in \Cont[0,1]\) with \(f(1-u^2)\varphi(a)(1-F^2)\in
  \Null(\Hilm)\) for all \(a\in A\) is a closed ideal because
  \(\Null(\Hilm)\) is a closed ideal.  Since \(1-u^2\) and
  \(\sqrt{1-u^2}\) generate the same closed ideal in \(\Cont[0,1]\),
  namely, the ideal of functions vanishing at~\(1\), our condition is
  equivalent to \(\sqrt{1-u^2}\varphi(a)(1-F^2)\in\Null(\Hilm)\) for
  all \(a\in A\).  We may do the same to~\(F\), so our condition is
  also equivalent to \(\sqrt{1-u^2}\varphi(a)\sqrt{1-F^2} \in
  \Null(\Hilm)\) for all \(a\in A\).  Moreover, we may change the
  order of the three factors here arbitrarily.  Therefore,
  \([F_{2s},\varphi_2(a)]\in\Null(\Hilm_2)\) for all \(a\in A\).
  Furthermore, \([u,F]\in\Comp(\Hilm)\) implies
  \begin{align*}
    & \phantom{{}={}} (1-F_{2s}^2)\varphi(a)
    \\&\equiv
    \begin{pmatrix}
      (1-F^2)(1-s^2+s^2u^2)& 0\\
      0& (1-F^2)(1-s^2+s^2u^2)
    \end{pmatrix}\varphi(a)
    \bmod \Comp(\Hilm_2).
  \end{align*}
  Hence \((\Hilm_2,F_{2s})\) is a homotopy of cycles for
  \(\KE^G(A,B)\).  For \(s=0\), \((\Hilm_2,F_{20})\) is a direct sum
  of \((\Hilm,F)\) with a degenerate cycle and hence homotopic to
  \((\Hilm,F)\).  Thus \((\Hilm,F)\) is homotopic to
  \((\Hilm_2,F_{21})\).  The diagonal entries of \(1-F_{21}^2\) are
  \((1-F^2)u^2\), which belongs to~\(\Pcomp(\Hilm)\) because
  \(u\in\Pcomp(\Hilm)\).  Hence \(1-F_{21}^2\in\Pcomp(\Hilm_2)\).
  Thus any cycle for \(\KE^G(A,B)\) is homotopic to one satisfying
  (aKm1s) and (aKm3s).

  If we already have \(F=F^*\) and \(\norm{F}\le1\), then the canonical
  homotopy from \(F\) to \(\chi((F+F^*)/2)\) is constant.  And if also
  \(1-F^2\in\Pcomp(\Hilm)\), then the homotopy~\(F_{2s}\) constructed
  above satisfies \(1-F_{2s}^2\in\Pcomp(\Hilm_2)\) for any choice
  of~\(u\).  If two cycles \((\Hilm_1,F_1)\) and \((\Hilm_2,F_2)\)
  satisfying (aKm1s) and (aKm3s) are homotopic, then we may apply the
  modifications above to a homotopy between them; this provides a
  homotopy between their modifications that satisfies (aKm1s) and
  (aKm3s); since the canonical homotopies from \((\Hilm_1,F_1)\) and
  \((\Hilm_2,F_2)\) to their modifications also satisfy (aKm1s)
  and~(aKm3s), we get a homotopy from \((\Hilm_1,F_1)\) to
  \((\Hilm_2,F_2)\) satisfying (aKm1s) and (aKm3s).  Hence restricting
  to cycles satisfying (aKm1s) and (aKm3s) does not change
  \(\KE^G(A,B)\).
\end{proof}

The \emph{standard \(G\times\Z/2\)\nb-equivariant Hilbert
  \(B\)\nb-module} is
\[
\Hils=\Hils_B\defeq L^2(G\times\Z/2)\otimes \ell^2(\N)\otimes B.
\]

\begin{lemma}
  \label{lem:equivariant_KE_cycle}
  We get the same group
  \(\KE^G(A\otimes\Comp(L^2G),B\otimes\Comp(L^2G))\) if we restrict
  attention to cycles for
  \(\KE^G(A\otimes\Comp(L^2G),B\otimes\Comp(L^2G))\) that satisfy
  \textup{(aKm1s)}, \textup{(aKm3s)} and \textup{(aKm4s)} and with
  underlying Hilbert module \(\Hilm=\Hils_{B\otimes\Comp(L^2 G)} L\),
  and homotopies between such cycles with the same properties.
\end{lemma}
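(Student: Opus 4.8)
The plan is to combine Lemma~\ref{lem:strengthen_KE_cycle} with equivariant stabilisation and an ``untwisting'' of the \(G\)\nb-action that the passage to the \(\Comp(L^2G)\)\nb-stabilised algebras makes available.  Write \(A'\defeq A\otimes\Comp(L^2G)\) and \(B'\defeq B\otimes\Comp(L^2G)\); recall that \(\Cont([0,1],B')\cong\Cont([0,1],B)\otimes\Comp(L^2G)\) and that \(\Hils_{B'}L=\Cont_0\bigl(L,L^2(G\times\Z/2)\otimes\ell^2(\N)\otimes B'\bigr)\).  Starting from an arbitrary cycle for \(\KE^G(A',B')\), Lemma~\ref{lem:strengthen_KE_cycle} lets us assume it satisfies \textup{(aKm1s)} and \textup{(aKm3s)}, so that \(F=F^*\) is odd with \(\norm F\le1\), \([F,\varphi(a)]\in\Null(\Hilm)\), \((gF-F)\varphi(a)\in\Null(\Hilm)\) and \((1-F^2)\varphi(a)\in\Pcomp(\Hilm)\).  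Its underlying module \(\Hilm\) is a countably generated \(G\times\Z/2\)\nb-equivariant Hilbert \(B'L\)\nb-module, so the equivariant stabilisation theorem gives a \(G\times\Z/2\)\nb-equivariant isomorphism \(\Hilm\oplus\Hils_{B'}L\cong\Hils_{B'}L\).  Adding the degenerate cycle \((\Hils_{B'}L,0)\) changes neither the homotopy class nor the validity of \textup{(aKm1s)} and \textup{(aKm3s)}, so from now on we may assume \(\Hilm=\Hils_{B'}L\).

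The remaining — and principal — task is to replace \(F\) by a \(G\)\nb-invariant operator on this same module while keeping \textup{(aKm1s)} and \textup{(aKm3s)}.  Here the two \(\Comp(L^2G)\)\nb-tensorands are essential.  Using the Morita equivalence between \(B'\) and \(B\) implemented by \(B\otimes L^2(G)\), together with Fell's absorption principle, one rewrites \(\Hils_{B'}L\) \(G\)\nb-equivariantly as a module on which the \(G\)\nb-action is of ``regular type'' — stably of the form \(\Hilm_0\otimes L^2(G)\) with \(G\) acting diagonally through the regular representation on the second tensorand.  On such a module the approximate equivariance \((gF-F)\varphi(a)\in\Null(\Hilm)\) says that a certain \(\Null\)\nb-valued \(1\)\nb-cocycle on~\(G\) is, thanks to the copies of the regular representation now present, an honest coboundary.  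I would accordingly produce a \(G\)\nb-continuous path of \(\Z/2\)\nb-even unitary multipliers conjugating \(F\) — through cycles satisfying \textup{(aKm1s)} — into a form from which this coboundary can be subtracted off so as to land back among cycles satisfying \textup{(aKm3s)}, retruncating by the function~\(\chi\) of Lemma~\ref{lem:strengthen_KE_cycle} to restore \(\norm F\le1\) where needed.  The outcome is a cycle on \(\Hils_{B'}L\) satisfying \textup{(aKm1s)}, \textup{(aKm3s)} and \textup{(aKm4s)}; concatenating the conjugation path with the final straight\nb-line path gives a homotopy to the original cycle.

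For the second assertion one runs the same recipe on a homotopy, i.e.\ on a cycle for \(\KE^G\bigl(A',\Cont([0,1],B')\bigr)\) between two cycles that already satisfy \textup{(aKm1s)}, \textup{(aKm3s)} and \textup{(aKm4s)} on \(\Hils_{B'}L\).  Exactly as in the proof of Lemma~\ref{lem:strengthen_KE_cycle}, each of the modifications above acts on a cycle that already satisfies the three strong conditions only through a constant homotopy; splicing the constant canonical homotopies at the two endpoints with the modified homotopy then yields a homotopy satisfying \textup{(aKm1s)}, \textup{(aKm3s)} and \textup{(aKm4s)} throughout and living on \(\Hils_{B'}L\).

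I expect the real obstacle to be the equivariance step of the second paragraph: for non-compact~\(G\) one cannot average \(F\) over~\(G\), so the argument must genuinely extract the trivialising coboundary from the structure of the \(\Comp(L^2G)\)\nb-stabilised standard module — this is the \(\KE\)\nb-theoretic counterpart of the classical reduction of \(\KK^G\) to honestly \(G\)\nb-equivariant cycles over \(\Comp(L^2G)\)\nb-stabilised algebras — while never leaving the class of cycles satisfying \textup{(aKm1s)} and \textup{(aKm3s)} on the module \(\Hils_{B'}L\).
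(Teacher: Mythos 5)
There is a genuine gap at the central equivariance step, which you yourself flag as ``the real obstacle''.  After reducing to \textup{(aKm1s)}, \textup{(aKm3s)} and the standard module, you propose to replace~\(F\) by a \(G\)\nb-invariant operator by reading \((gF-F)\varphi(a)\in\Null(\Hilm)\) as saying that a certain \(\Null\)\nb-valued \(1\)\nb-cocycle on~\(G\) becomes a coboundary in the presence of the regular representation, and then ``subtracting it off'' by a path of unitary multipliers.  You never produce the coboundary or the conjugating path, and you do not verify that any of this keeps you inside the class of cycles on the standard module satisfying \textup{(aKm1s)} and \textup{(aKm3s)}; the step the lemma is really about is left as a programme rather than an argument.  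There is also a subtlety at the stabilisation step: the paper's proof explicitly remarks that Kasparov's Stabilisation Theorem only yields a \(G\)\nb-continuous, \(\Z/2\)\nb-equivariant unitary \(\Hilm\oplus\Hils_B L\to\Hils_B L\), not a \(G\)\nb-equivariant one, unless~\(G\) is compact, while you assert a \(G\times\Z/2\)\nb-equivariant isomorphism outright; if you mean to invoke an equivariant stabilisation result that applies because the coefficients are already \(\Comp(L^2G)\)\nb-stabilised, that needs to be stated and justified, and it is not what the paper does.

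The paper's actual argument avoids any coboundary manipulation.  It first invokes Dumitra\c{s}cu's theorem that the exterior-product map \(\KE^G(A,B)\to\KE^G(A\otimes\Comp(L^2G),B\otimes\Comp(L^2G))\) is an isomorphism, to reduce to cycles of the special form \((\Hilm\otimes\Comp(L^2G),F\otimes1)\) with \((\Hilm,F)\) a \(\KE^G(A,B)\)\nb-cycle satisfying \textup{(aKm1s)} and \textup{(aKm3s)} together with a merely \(G\)\nb-continuous, \(\Z/2\)\nb-preserving unitary \(V\colon\Hilm\to\Hils_BL\).  Then Fell's trick is carried out explicitly at the level of Hilbert modules: \((V'f)(g)\defeq g(V(f(g)))\) defines a \(G\times\Z/2\)\nb-equivariant unitary \(L^2(G,\Hilm)\to L^2(G,\Hils_BL)\), a similar formula turns~\(F\) into a \(G\)\nb-equivariant odd operator~\(F'\) on \(L^2(G,\Hilm)\), and the hypothesis \(gF-F\in\Null(\Hilm)\) makes~\(F'\) a small perturbation of \(F\otimes1\).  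Using \(\Comp(L^2G)\cong L^2G\otimes(L^2G)^*\) and \(\Hils_BL\otimes\Comp(L^2G)=\Hils_{B\otimes\Comp(L^2G)}L\), this produces a homotopic cycle on the standard module satisfying \textup{(aKm1s)}, \textup{(aKm3s)}, \textup{(aKm4s)}, and the same applies to homotopies.  The ``trivialising coboundary'' you are looking for is thus realised concretely by passing to \(L^2(G,{-})\); nothing is subtracted from~\(F\), and the strong conditions are manifestly preserved.  You gesture at the correct ingredients (Fell's absorption, the Morita equivalence implemented by \(B\otimes L^2G\)) but do not assemble them into an actual construction, and you also omit the reduction via the exterior-product isomorphism that puts the cycle into a form to which Fell's trick applies.
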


\begin{proof}
  The main ideas below already appeared in~\cite{Meyer:Equivariant}
  and as Fell's trick in \cite{Dumitrascu:Thesis}*{Lemma 3.3.3}.
  Let~\(F_0\) be the canonical isomorphism
  \(\Hils_+\leftrightarrow\Hils_-\) and let \(\varphi_0=0\); this
  gives a degenerate cycle with underlying Hilbert module~\(\Hils L\).
  Hence any \(\KE^G(A,B)\)-cycle \((\Hilm,F)\) is equivalent to
  \((\Hilm\oplus\Hils L,F\oplus F_0)\).  Since~\(\Hilm\) must be
  countably generated, Kasparov's Stabilisation Theorem gives a
  \(G\)\nb-continuous, \(\Z/2\)\nb-equivariant unitary operator
  \(V\colon \Hilm\oplus\Hils L\to\Hils L\).  (Unless~\(G\) is compact,
  we cannot expect~\(V\) to be \(G\)\nb-equivariant.)

  Therefore, we get the same set of homotopy classes \(\KE^G(A,B)\) if
  we restrict attention to cycles~\((\Hilm,F)\) for which there is a
  \(G\)\nb-continuous, \(\Z/2\)\nb-grading preserving unitary
  \(V\colon \Hilm \to \Hils_B L\).  This may be combined with
  Lemma~\ref{lem:strengthen_KE_cycle}, that is, we get the same set of
  homotopy classes if we assume~\((\Hilm,F)\) to satisfy (aKm1s) and
  (aKm3s) and to have such a unitary~\(V\).  The unitary~\(V\) defines
  a \(G\times\Z/2\)-equivariant unitary
  \[
  V'\colon L^2(G,\Hilm) \to L^2(G,\Hils_B L),\qquad
  (V' f)(g) \defeq g (V(f(g))).
  \]
  By a similar formula, any \(F\in\Bound(\Hilm)\) defines a
  \(G\)\nb-equivariant adjointable operator~\(F'\) on
  \(L^2(G,\Hilm)\).  By \cite{Dumitrascu:KE}*{Theorem 3.21}, the
  exterior product map
  \[
  \KE^G(A,B)\to\KE^G(A\otimes\Comp(L^2G),B\otimes\Comp(L^2G)),\quad
  (\Hilm,F)\mapsto (\Hilm\otimes\Comp(L^2G),F\otimes1),
  \]
  is an isomorphism.  So any cycle for
  \(\KE^G(A\otimes\Comp(L^2G),B\otimes\Comp(L^2G))\) is homotopic to
  \((\Hilm\otimes\Comp(L^2G),F\otimes1)\) for some cycle \((\Hilm,F)\)
  for \(\KE^G(A,B)\) with (aKm1s) and (aKm3s) and a unitary \(V\colon
  \Hilm \to \Hils_B L\) as above; and if two such cycles are
  homotopic, there is a homotopy of the same form.

  As a Hilbert module over itself, \(\Comp(L^2G)\cong L^2G\otimes (L^2
  G)^*\), where~\((L^2G)^*\) is viewed as a Hilbert
  \(\Comp(L^2G)\)-module.  Hence~\(V'\) induces a
  \(G\times\Z/2\)-equivariant unitary \(\Hilm\otimes\Comp(L^2G)\to
  \Hils_B L\otimes\Comp(L^2G)=\Hils_{B\otimes\Comp(L^2G)} L\),
  and~\(F'\) induces a \(G\)\nb-equivariant odd operator on
  \(\Hilm\otimes\Comp(L^2G)\).  Since \(gF-F\in\Null(\Hilm)\), \(F'\)
  is a small perturbation of \(F\otimes 1\).  Thus we get the same
  group \(\KE^G(A\otimes\Comp(L^2G), B\otimes\Comp(L^2G))\) if we use
  only those cycles and homotopies that satisfy (aKm1s), (aKm3s) and
  (aKm4s) and have underlying Hilbert module
  \(\Hilm=\Hils_{B\otimes\Comp(L^2 G)} L\).
\end{proof}

For the passage from \(\KE^G\) to \(\E^G\), it is harmless to
stabilise the \(\Cst\)\nb-algebras \(A\) and~\(B\).  Hence
Lemma~\ref{lem:equivariant_KE_cycle} says that it is essentially no
loss of generality to restrict attention to those cycles for \(\KE^G\)
that satisfy the stronger assumptions (aKm1s), (aKm3s) and (aKm4s).
Furthermore, we may assume that \(\Hilm=\Hils_B L\) is the constant
family with fibre the standard \(G\)\nb-equivariant Hilbert
\(B\)\nb-module~\(\Hils_B\).

\begin{remark}
  \label{rem:KK_in_KE_even_better}
  If a cycle for \(\KE^G(A,B)\) is in the image of \(\KK^G(A,B)\),
  then it satisfies more than (aKm2), namely, \([F,\varphi(a)]\in
  \Comp(\Hilm)\) for all \(a\in A\).  If \(\KK^G\) and~\(\KE^G\)
  were equivalent, then any cycle for~\(\KE^G\) would be homotopic
  to one with this extra property.  I do not know, however, how to
  prove this.
\end{remark}

\section{Constructing asymptotic morphisms from KE-cycles}
\label{sec:cp_am_from_KE}

Let \(S \defeq \Cont_0((-1,1))\) with the \(\Z/2\)-grading
automorphism \(\gamma f(x)=f(-x)\).  Dumitra\c{s}cu maps a cycle
\((\Hils_BL,\varphi,F)\) for \(\KE^G(A,B)\) to an asymptotic morphism
from \(S\hot A\) to \(\Comp(\Hils_B)\) in
\cite{Dumitrascu:KE}*{Section 4.1}, as follows.  Since \(\Null(\Hils_B
L)\cap\Pcomp(\Hils_BL) = \Comp(\Hils_BL)\) and
\([F,\varphi(A)]\subseteq\Null(\Hils_BL)\) by (aKm2), the images of
\(A\) and~\(F\) in \(\Pcomp(\Hils_BL)/\Comp(\Hils_BL)\) commute.
Hence there is a unique \Star{}homomorphism
\[
\Xi\colon S \hot A\to \Pcomp(\Hils_BL)/\Comp(\Hils_BL)
\]
with \(\Xi(h\otimes a) = h(F)\varphi(a)\) for all \(h\in S\), \(a\in
A\) (this works for the maximal \(\Cst\)\nb-norm, which is the only
\(\Cst\)\nb-norm here because~\(S\) is nuclear).  We may lift~\(\Xi\)
to a map (of sets) \(\bar\Xi\colon S \hot A\to \Pcomp(\Hils_BL)\),
which we may view as a family of maps \(\bar\Xi_t\colon S\hot A\to
\Comp(\Hils_B)\), \(t\in L\).  These maps~\(\bar\Xi_t\) form an
asymptotic morphism.  This is used in \cite{Dumitrascu:KE}*{Section
  4.1} to construct a functor \(\KE^G\to \E^G\).

For cycles with extra properties as in
Lemma~\ref{lem:equivariant_KE_cycle}, we are going to construct a
completely positive, contractive and \(G\times\Z/2\)\nb-equivariant
choice for~\(\bar\Xi\) in a natural way.  Using Thomsen's picture
for~\(\KK^G\), this will give a functor \(\KE^G\to\KE^G\), by
essentially the same arguments as in~\cite{Dumitrascu:KE}.

First we approximate the identity map on~\(S\) by
\(\Z/2\)\nb-equivariant, completely positive contractions of finite
rank.  Let \(n\in\N\).  Let \(I_n
\defeq\{-2^n+1,-2^n+2,\ldots,2^n-1\}\).  For \(k\in I_n\), define
\(\psi_{n,k}\in S\) by
\[
\psi_{n,k}(x) \defeq
\begin{cases}
  \sqrt{2^n x-(k-1)}&\text{for }k-1\le 2^n x\le k,\\
  \sqrt{k+1- 2^n x}&\text{for }k\le 2^n x\le k+1,\\
  0&\text{otherwise.}
\end{cases}
\]
Thus~\(\psi_{n,k}^2\) is the unique continuous, piecewise linear
function with singularities in \(2^{-n}\cdot\{k-1,k,k+1\}\) and
\(\psi_{n,k}^2(2^{-n}k)=1\) and \(\psi_{n,k}^2(2^{-n}l)=0\) for
\(k\neq l\).  We have \(\gamma(\psi_{n,k})=\psi_{n,-k}\) for all
\(k\in I_n\).  Define
\[
\Psi_n\colon S\to S,\qquad
f\mapsto \sum_{k\in I_n} f(2^{-n}k)\cdot \psi_{n,k}^2.
\]
Equivalently,
\begin{equation}
  \label{eq:Psi_n_concrete}
  \Psi_n f(2^{-n}(k+t)) =
  (1-t)\cdot f(2^{-n}k) + t\cdot f(2^{-n}(k+1))
\end{equation}
for \(k\in\{-2^n,-2^n+1,\ldots,2^n-1\}\), \(t\in[0,1]\), because
\(f(\pm1)=0\).

By construction, \(\Psi_n\) is a completely positive map of finite
rank.  It is grading-preserving because
\(\gamma(\psi_{n,k})=\psi_{n,-k}\), and contractive because
\(\sum_{k\in I_n} \psi_{n,k}^2\le1\).  We have \(\lim
\norm{f-\Psi_n(f)}_\infty=0\) for each \(f\in S\), and this holds
uniformly for~\(f\) in a compact subset of~\(S\) because all the
operators~\(\Psi_n\) are contractions.

Now let \(A\) and~\(B\) be \(\Z/2\)-graded \(\Cst\)\nb-algebras.
Let~\(\hot\) be the graded-commutative tensor product.  This is
functorial for grading-preserving completely positive contractions.
Hence we get a grading-preserving completely positive contraction
\(\Psi_n^A=\Psi_n\hot\ID_A\colon S\hot A\to S\hot A\).  The sequence
\(\Psi_n^A(f)\) converges in norm to~\(f\) for any \(f\in S\hot A\)
because~\(\Psi_n\) converges to~\(\ID_S\) uniformly on compact
subsets.

To make use of Lemma~\ref{lem:equivariant_KE_cycle}, we assume \(A =
A_0\otimes\Comp(L^2G)\) and \(B= B_0\otimes\Comp(L^2G)\) for some
\(\Z/2\)-graded \(\Cst\)\nb-algebras \(A_0\) and~\(B_0\).  Then we get
the same group \(\KE^G(A,B)\) if we use cycles and homotopies that
satisfy (aKm1s), (aKm2), (aKm3s) and (aKm4s), and where the underlying
family of Hilbert modules~\(\Hilm\) is the constant
family~\(\Hils_BL\) with the standard \(G\)\nb-equivariant Hilbert
\(B\)\nb-module~\(\Hils_B\) as its fibre.  (Actually, \(\Hils_B\) is
\(G\)\nb-equivariantly isomorphic to \((B^\infty)\oplus
(B^\infty)^\op\).)

Let \((\varphi,F)\) be such a special cycle for \(\KE^G(A,B)\).  That is,
\(\varphi\colon A\to\Bound(\Hils_BL)\) is a \(G\times\Z/2\)-equivariant
\Star{}homomorphism and \(F\in\Bound(\Hils_BL)\), such that
\(\gamma(F)=-F\), \(F=F^*\), \(\norm{F}\le 1\), \(g(F)=F\) for all
\(g\in G\), \(\lim_{t\to\infty} \norm{[F_t,\varphi_t(a)]}=0\) for all
\(a\in A\), and \((1-F^2)\varphi(a)\in\Pcomp(\Hils_BL)\).  Since 
\([(1-F^2)\varphi(a^*)]^* = \varphi(a)(1-F^2)\), it is equivalent to
require \((1-F^2)\varphi(a)\in\Pcomp(\Hils_BL)\) or
\(\varphi(a)(1-F^2)\in\Pcomp(\Hils_BL)\) for all \(a\in A\).
Furthermore, this implies \(h(F)\varphi(a)\in \Pcomp(\Hils_BL)\) and
\(\varphi(a)h(F)\in \Pcomp(\Hils_BL)\) for all \(h\in S\).

The next step is easier to write down for trivially graded~\(A\), so
we assume this for a moment to explain our idea.  Then \(S\hot A\cong
\Cont_0((-1,1),A)\).  Since \(\psi_{n,k}\in S\), we get
\(\psi_{n,k}(F) \varphi(a)\in\Pcomp(\Hils_BL)\) for all \(n\in\N\),
\(k\in I_n\), \(a\in A\).  Hence
\begin{equation}
  \label{eq:xi_n_formula}
  \xi_n(f) \defeq \sum_{k=-2^n+1}^{2^n-1}
  \psi_{n,k}(F) \varphi(f(k\cdot 2^{-n})) \psi_{n,k}(F)
\end{equation}
for \(f\colon (-1,1)\to A\) continuous with \(f(\pm1)=0\) defines a
map \(\xi_n\colon S\hot A\to\Pcomp(\Hils_BL)\).  This map is
grading-preserving, completely positive and \(G\)\nb-equivariant
because \(F=F^*\), \(\norm{F}\le1\) and \(F\) is \(G\)\nb-equivariant.
If \(f\ge0\), then
\[
\xi_n(f) \le
\sum_{k=-2^n+1}^{2^n-1} \psi_{n,k}(F) \cdot \norm{f(k\cdot 2^{-n})} \cdot
\psi_{n,k}(F)
\le \norm{f}_\infty \sum_{k=-2^n+1}^{2^n-1} \psi_{n,k}(F)^2
\le \norm{f}_\infty;
\]
thus~\(\xi_n\) is contractive.  If \(\pi\colon \Pcomp(\Hils_BL) \to
\Pcomp(\Hils_BL)/\Comp(\Hils_BL)\) denotes the quotient map, then
\(\pi\circ\xi_n = \Xi\circ\Psi^A_n\) because \(\pi(A)\) and~\(\pi(F)\)
commute.  Now we remove the assumption that~\(A\) is trivially graded:

\begin{lemma}
  \label{lem:cp_am_graded}
  There is a sequence of \(G\times\Z/2\)-equivariant completely
  positive contractive maps \(\xi_n\colon S\hot A\to\Pcomp(\Hils_BL)\)
  with \(\pi\circ\xi_n = \Xi\circ\Psi^A_n\) for \(n\in\N\), even
  if~\(A\) is \(\Z/2\)\nb-graded.
\end{lemma}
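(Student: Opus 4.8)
The plan is to factor $\Psi_n$ through a finite-dimensional algebra and then produce the lift by a Stinespring-type formula on a doubled module. First I would put $I_n^+:=\{1,\dots,2^n-1\}$ and factor $\Psi_n=\beta_n\circ\alpha_n$, where $\alpha_n\colon S\to\Cont_0(I_n)$ is the evaluation \Star{}homomorphism $f\mapsto\bigl(f(2^{-n}k)\bigr)_{k\in I_n}$ and $\beta_n\colon\Cont_0(I_n)\to S$ is $\delta_k\mapsto\psi_{n,k}^2$. If we grade $\Cont_0(I_n)$ by the flip $k\mapsto-k$, both maps are grading-preserving, $\alpha_n$ is a \Star{}homomorphism, and $\beta_n$ is a completely positive contraction (completely positive because its domain is commutative, contractive because $\sum_{k\in I_n}\psi_{n,k}^2\le1$), so $\beta_n\hot\ID_A$ is a grading-preserving completely positive contraction. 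Hence $\Xi\circ\Psi_n^A=\bar\Theta_n\circ(\alpha_n\hot\ID_A)$, where $\bar\Theta_n:=\Xi\circ(\beta_n\hot\ID_A)\colon\Cont_0(I_n)\hot A\to\Pcomp(\Hils_BL)/\Comp(\Hils_BL)$ sends $\delta_k\otimes a$ to $\overline{\psi_{n,k}(F)^2\varphi(a)}$. Since $\alpha_n\hot\ID_A$ is a grading-preserving $G$-equivariant \Star{}homomorphism, it is enough to lift $\bar\Theta_n$ to a $G\times\Z/2$-equivariant completely positive contraction $\Theta_n\colon\Cont_0(I_n)\hot A\to\Pcomp(\Hils_BL)$ and then set $\xi_n:=\Theta_n\circ(\alpha_n\hot\ID_A)$.

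The naive choice $x\mapsto\sum_k\psi_{n,k}(F)\varphi(x(2^{-n}k))\psi_{n,k}(F)$ does not work for graded~$A$: the function $\psi_{n,k}$ is not homogeneous, so $\psi_{n,k}(F)$ neither graded-commutes with $\varphi(A)$ modulo $\Comp(\Hils_BL)$ (so this is not even a lift of $\bar\Theta_n$) nor defines a positive map on the graded tensor product. The remedy is to work $\pm k$-symmetrically. I would set $\mu_k:=\psi_{n,k}^2+\psi_{n,-k}^2\in S$, an even function with $0\le\mu_k\le1$, and let $\eta_k\in S$ be the odd function $\eta_k(x):=\operatorname{sgn}(x)\sqrt{\mu_k(x)}$, which is continuous at $0$ because $\mu_k$ vanishes there; then $\eta_k^2=\mu_k$, $\eta_k\sqrt{\mu_k}=\psi_{n,k}^2-\psi_{n,-k}^2$, $\sqrt{\mu_k}(F)$ is even and $\eta_k(F)$ is odd. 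Grouping $k$ with $-k$ identifies $\Cont_0(I_n)\hot A\cong\bigoplus_{k\in I_n^+}\bigl(\Cl_1\hot A\bigr)$, and $\Cl_1\hot A\cong A\rtimes_\gamma\Z/2$ has the grading-preserving $G$-equivariant \Star{}representation $\Pi$ on the doubled module $E:=\Hils_BL\oplus(\Hils_BL)^\op$ given by the covariant pair $\bigl(\varphi\oplus(\varphi\circ\gamma),\,U\bigr)$, where $U$ is the flip of the two summands, an odd self-adjoint unitary. Let $T_k:=2^{-1/2}\bigl(\sqrt{\mu_k}(F)\oplus\eta_k(F)\bigr)\colon\Hils_BL\to E$; this is an even operator, and it is $G$-equivariant because $gF=F$ by (aKm4s). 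Then $\theta_k:=T_k^*\,\Pi(\blank)\,T_k\colon\Cl_1\hot A\to\Pcomp(\Hils_BL)$ is completely positive as a compression of a \Star{}representation and contractive since $T_k^*T_k=\mu_k(F)\le1$, and a short calculation — move $\varphi$ past functions of $F$ modulo $\Comp(\Hils_BL)$, using (aKm2) and $h(F)\varphi(a)\in\Pcomp(\Hils_BL)$ for $h\in S$, $a\in A$, together with $\mu_k\pm(\psi_{n,k}^2-\psi_{n,-k}^2)=2\psi_{n,\pm k}^2$ — shows that $\pi\circ\theta_k$ equals the $k$-th component $\bar\theta_k$ of $\bar\Theta_n$.

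Finally I assemble $\Theta_n\bigl(\bigoplus_k x_k\bigr):=\sum_{k\in I_n^+}\theta_k(x_k)=\mathbb T^*\bigl(\bigoplus_k\Pi(x_k)\bigr)\mathbb T$, where $\mathbb T:=(T_k)_{k\in I_n^+}\colon\Hils_BL\to\bigoplus_k E$. Then $\mathbb T^*\mathbb T=\sum_{k\in I_n^+}T_k^*T_k=\sum_{k\in I_n^+}\mu_k(F)=\sum_{l\in I_n}\psi_{n,l}(F)^2\le1$, so $\|\mathbb T\|\le1$ and $\Theta_n$ is a completely positive contraction; it is $G\times\Z/2$-equivariant because $\Pi$ is and because $gT_k=T_k$ (again $gF=F$, and $\Cont_0(I_n)$ carries the trivial $G$-action), and $\pi\circ\Theta_n=\bar\Theta_n$ because $\pi\circ\theta_k=\bar\theta_k$ for every $k$. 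Thus $\xi_n:=\Theta_n\circ(\alpha_n\hot\ID_A)$ is a $G\times\Z/2$-equivariant completely positive contraction with $\pi\circ\xi_n=\Xi\circ\Psi_n^A$. The one real obstacle is exactly the non-homogeneity of the $\psi_{n,k}$ dealt with in the second paragraph: it forces the $\pm k$-symmetrisation, the passage to $A\rtimes_\gamma\Z/2$ and the opposite-graded summand $(\Hils_BL)^\op$, and the verification that the single operator $\mathbb T$ stays a contraction even though the supports of the $\mu_k$ overlap — which works precisely because $\sum_{k\in I_n^+}\mu_k=\sum_{l\in I_n}\psi_{n,l}^2$.
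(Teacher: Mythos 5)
Your approach is, up to repackaging, exactly the paper's: both proofs use the $\pm k$-symmetrisation, the doubled module $\Hils_BL\oplus(\Hils_BL)^\op$ carrying $\varphi\oplus\varphi\circ\gamma$ with the off-diagonal flip, and a Stinespring compression; indeed, since $\psi_{n,k}$ and $\psi_{n,-k}$ have disjoint supports for $k\ge1$, one has $\sqrt{\mu_k}=\psi_{n,k}+\psi_{n,-k}$ and $\eta_k=\psi_{n,k}-\psi_{n,-k}$, so your $T_k$ are literally the paper's column operators $V_k$. The factorisation $\Psi_n=\beta_n\circ\alpha_n$ and the identification $\Cl_1\hot A\cong A\rtimes_\gamma\Z/2$ are a clean way of saying what the paper does via functional calculus of the block operator $x$.

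There is, however, a genuine gap: you dropped the $k=0$ summand. The set $I_n=\{-2^n+1,\dots,2^n-1\}$ contains $0$, which is the fixed point of the flip, so the correct $\Z/2$-graded decomposition is $\Cont_0(I_n)\cong\C\oplus\bigoplus_{k\in I_n^+}\Cl_1$, not $\bigoplus_{k\in I_n^+}\Cl_1$ (the cardinalities $2^{n+1}-1$ versus $2(2^n-1)$ already do not match). Your construction of $\eta_k$ visibly breaks at $k=0$: $\mu_0(0)=2\psi_{n,0}(0)^2=2\ne0$, so $\operatorname{sgn}(x)\sqrt{\mu_0(x)}$ is discontinuous at $0$. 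Because of the omission, your assembled map satisfies $\pi\circ\Theta_n\circ(\alpha_n\hot\ID_A)(f\hot a)=\sum_{l\in I_n\setminus\{0\}}\psi_{n,l}^2(F)\,f(2^{-n}l)\,\varphi(a)$, which differs from $\Xi\circ\Psi_n^A(f\hot a)$ by the term $\psi_{n,0}^2(F)f(0)\varphi(a)$, and the stated identity $\sum_{k\in I_n^+}\mu_k=\sum_{l\in I_n}\psi_{n,l}^2$ is off by $\psi_{n,0}^2$. The fix is exactly what the paper does: append a $k=0$ block where $A$ (trivially graded $\Cont_0(\{0\})\hot A$) acts by $\varphi$ on a single copy of $\Hils_BL$ and the column operator is $\psi_{n,0}(F)$ (with second component $0$); this is grading-preserving because $\psi_{n,0}$ is even, and then $\mathbb T^*\mathbb T=\psi_{n,0}^2(F)+\sum_{k\in I_n^+}\mu_k(F)=\sum_{l\in I_n}\psi_{n,l}^2(F)\le1$ as required.
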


\begin{proof}
  We fix \(n\in\N\).  To make the proof of complete positivity easy,
  we directly construct the Stinespring dilation of our map~\(\xi_n\).
  Let
  \[
  \Hilm\defeq \bigoplus_{k=0}^{2^n-1} \bigl(\Hils_BL \oplus (\Hils_BL)^\op\bigr).
  \]
  Let~\(A\) act by \(\varphi\oplus\varphi\circ\gamma\) on each summand
  \(\Hils_BL\oplus (\Hils_BL)^\op\).  Let \(x\colon \Hilm\to\Hilm\) be
  the operator that acts by
  \[
  \begin{pmatrix}
    0&2^{-n}k\\2^{-n}k&0
  \end{pmatrix}
  \]
  on the \(k\)th summand.  This operator is self-adjoint, and it
  graded-commutes with the representation of~\(A\) because we
  take~\(\varphi\gamma\) for the second summands.  Thus the functional
  calculus for~\(x\) provides a \Star{}homomorphism
  \(S\to\Bound(\Hilm)\) that graded-commutes with~\(A\).  Hence we get a
  \(G\times\Z/2\)-equivariant \Star{}homomorphism \(\alpha\colon S\hot
  A\to\Bound(\Hilm)\).  We let \(\xi_n(f)\defeq V^* \alpha(f) V\) for
  all \(f\in S\hot A\), where \(V=(V_k)_{k\in I_n}\colon \Hils_BL \to
  \Hilm\) has the components
  \begin{align*}
    2^{-1/2}(\psi_{n,k}(F)+\psi_{n,-k}(F))&\colon \Hils_BL\to\Hils_BL,\\
    2^{-1/2}(\psi_{n,k}(F)-\psi_{n,-k}(F))&\colon \Hils_BL\to(\Hils_BL)^\op\\
    \shortintertext{for \(k>0\), and}
    \psi_{n,0}(F) = 2^{-1}(\psi_{n,k}(F)+\psi_{n,-k}(F))&\colon \Hils_BL\to\Hils_BL,\\
    0 = 2^{-1}(\psi_{n,k}(F)-\psi_{n,-k}(F))&\colon \Hils_BL\to(\Hils_BL)^\op
  \end{align*}
  for \(k=0\).  Notice that~\(V_k\) is grading-preserving because
  \(\psi_{n,k}+\psi_{n,-k}\) is an even function and
  \(\psi_{n,k}-\psi_{n,-k}\) is an odd function.  Since~\(V\) is
  \(G\)\nb-invariant as well, \(\xi_n\) is
  \(G\times\Z/2\)-equivariant.  The map~\(\xi_n\) is completely
  positive.  Since
  \begin{multline*}
    V^*V = \biggl(\psi_{n,0}^2 + \frac{1}{2}
    \sum_{k=1}^{2^n-1} (\psi_{n,k}+\psi_{n,-k})^2 +
    (\psi_{n,k}-\psi_{n,-k})^2 \biggr)(F)
    \\= \biggl( \psi_{n,0}^2 + \sum_{k=1}^{2^n-1} \psi_{n,k}^2+\psi_{n,-k}^2
    \biggr)(F)
    = \biggl( \sum_{k\in I_n} \psi_{n,k}^2\biggr)(F)
    \le 1,
  \end{multline*}
  the map~\(\xi_n\) is completely contractive.

  Let \(f\in S\) and \(a\in A\).  If \(f\in S\) is even, then
  \begin{align*}
    \xi_n(f\hot a)
    &= \psi_{n,0}(F) f(0)\varphi(a) \psi_{n,0}(F)
    \\&\qquad+ \sum_{k=1}^{2^n-1} (\psi_{n,k}(F)+\psi_{n,-k}(F)) f(2^{-n}k)\varphi(a)
    (\psi_{n,k}(F)+\psi_{n,-k}(F))
    \\&\qquad+ (\psi_{n,k}(F)-\psi_{n,-k}(F)) f(2^{-n}k)\varphi\gamma(a)
    (\psi_{n,k}(F)-\psi_{n,-k}(F));
  \end{align*}
  if \(f\in S\) is odd, then
  \begin{multline*}
    \xi_n(f\hot a)
    = \sum_{k=1}^{2^n-1} (\psi_{n,k}(F)-\psi_{n,-k}(F)) f(2^{-n}k)\varphi(a)
    (\psi_{n,k}(F)+\psi_{n,-k}(F))
    \\+ (\psi_{n,k}(F)+\psi_{n,-k}(F)) f(2^{-n}k)\varphi\gamma(a)
    (\psi_{n,k}(F)-\psi_{n,-k}(F))
  \end{multline*}

  Now we use that~\(\pi(F)\) graded-commutes with~\(\pi\varphi(A)\) to
  simplify \(\pi\circ\xi_n(f\hot a)\).  For even~\(f\), this is equal
  to the \(\pi\)\nb-image of
  \begin{multline*}
    \frac{1}{2}\sum_{k=1}^{2^n-1} (\psi_{n,k}+\psi_{n,-k})^2(F) f(2^{-n}k)\varphi(a)
    + (\psi_{n,k}-\psi_{n,-k})^2(F) f(2^{-n}k)\varphi(a)
    \\ + \psi_{n,0}^2(F) f(0)\varphi(a)
    = \sum_{k\in I_n} \psi_{n,k}^2(F) f(2^{-n}k) \varphi(a)
    = \Psi_n^A(f)(F)\cdot \varphi(a),
  \end{multline*}
  which is \(\Xi\circ\Psi_n^A(f\hot a)\).  For odd~\(f\),
  \(\pi\circ\xi_n(f\hot a)\) is equal to the \(\pi\)\nb-image of
  \begin{align*}
    &\phantom{{}={}} \sum_{k=1}^{2^n-1} (\psi_{n,k}+\psi_{n,-k})(F)(\psi_{n,k}-\psi_{n,-k})(F)
    f(2^{-n}k)\varphi(a)
    \\&= \sum_{k=1}^{2^n-1} (\psi_{n,k}^2-\psi_{n,-k}^2)(F) f(2^{-n}k) \varphi(a)
    \\&= \sum_{k\in I_n} \psi_{n,k}^2(F) f(2^{-n}k) \varphi(a)
    = \Psi_n^A(f)(F)\cdot \varphi(a),
  \end{align*}
  which is \(\Xi\circ\Psi_n^A(f\hot a)\) once again.  Thus
  \(\Xi\circ\Psi^A_n(f\hot a)= \pi(\Psi^A_n(f)(F)\cdot \varphi(a))\) for
  all \(f\in S\), \(a\in A\), as desired.
\end{proof}

Let \(\xi_{n+s} = (1-s)\xi_n+ s\xi_{n+1}\) for \(n\in\N\),
\(s\in[0,1]\).  The maps \((\xi_s)_{s\in L}\) form a continuous family
of grading-preserving, \(G\)\nb-equivariant, completely positive
contractions \(\xi_s\colon S \hot A\to\Pcomp(\Hils_BL)\).  In the
following, we view~\(\xi_s\) as a family of functions
\(\xi_{s,t}\colon S\hot A\to\Comp(\Hils_B)\), and we lift~\(\Xi\) to
an asymptotic morphism \(\bar\Xi_t\colon S\hot A\to\Comp(\Hils_B)\).

\begin{lemma}
  \label{lem:asymptotically_mult}
  For separable~\(A\), there is a continuous increasing function
  \(t_0\colon L\to L\) with \(\lim_{s\to\infty} t_0(s)=\infty\) such
  that for all \(t\ge t_0\), \(\xi_{s,t(s)}\colon S\hot
  A\to\Comp(\Hils_B)\) is asymptotically equal to the
  reparametrisation~\(\Xi_{t(s)}\) of~\(\Xi\) and hence an asymptotic
  morphism in the same class as~\(\Xi\).
\end{lemma}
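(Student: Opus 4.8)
My plan is to run the standard ``approximate \(\ast\)\nb-homomorphism to asymptotic morphism'' reparametrisation, while keeping track of~\(\bar\Xi\) at every step so that the resulting asymptotic morphism lands in the right homotopy class.

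The first observation is that \(\pi\circ\xi_s\to\Xi\) pointwise in norm as \(s\to\infty\).  For \(s\in[n,n+1]\) we have \(\xi_s=(1-(s-n))\xi_n+(s-n)\xi_{n+1}\), so Lemma~\ref{lem:cp_am_graded} gives \(\pi\circ\xi_s=(1-(s-n))\,\Xi\circ\Psi^A_n+(s-n)\,\Xi\circ\Psi^A_{n+1}\); since \(\Psi^A_m(g)\to g\) in norm for every \(g\in S\hot A\) and \(\Xi\) is contractive, the function \(\varepsilon(s,g)\defeq\norm{\pi(\xi_s(g))-\Xi(g)}\) is continuous in~\(s\) and tends to~\(0\) as \(s\to\infty\).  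Next, \(\pi(\xi_s(g))-\Xi(g)\) is the class in \(\Pcomp(\Hils_BL)/\Comp(\Hils_BL)\) of the bounded, norm-continuous family \(\bigl(\xi_{s,t}(g)-\bar\Xi_t(g)\bigr)_{t\in L}\), and \(\Comp(\Hils_BL)\) is exactly the ideal of norm-continuous families of compact operators that vanish in norm as \(t\to\infty\); hence the quotient norm of such a class is the \(\limsup\) of the fibrewise norms, so
\[
\limsup_{t\to\infty}\norm{\xi_{s,t}(g)-\bar\Xi_t(g)}=\varepsilon(s,g)\qquad\text{for all }s\in L,\ g\in S\hot A.
\]

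Now I diagonalise.  Fix a dense sequence \((g_i)_{i\in\N}\) in the separable \(\Cst\)\nb-algebra \(S\hot A\).  The functions \(h_i(s,t)\defeq\norm{\xi_{s,t}(g_i)-\bar\Xi_t(g_i)}\) are jointly continuous, and since \(s\mapsto\xi_s(g)\) is Lipschitz with constant \(2\norm{g}\) (being piecewise linear between contractions), the maps \(s\mapsto h_i(s,t)\) are equi-Lipschitz, uniformly in~\(t\).  Choose \(\sigma_m\in L\) strictly increasing with \(\sigma_m\to\infty\) and \(\varepsilon(s,g_i)<1/m\) for all \(s\ge\sigma_m\) and \(i\le m\).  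On each compact interval \([\sigma_m,\sigma_{m+1}]\) the functions \(s\mapsto\sup_{t\ge T}h_i(s,t)\) (for \(i\le m\)) are continuous, decrease as \(T\to\infty\) to \(s\mapsto\limsup_t h_i(s,t)=\varepsilon(s,g_i)<1/m\), and the convergence is uniform by Dini's theorem (the limit is continuous, being an infimum of equi-Lipschitz functions); so there is \(\tau_m\in L\), which I take increasing with \(\tau_m\to\infty\), so that \(h_i(s,t)<1/m\) for all \(s\in[\sigma_m,\sigma_{m+1}]\), \(t\ge\tau_m\), \(i\le m\).  Let \(t_0\colon L\to L\) be any continuous increasing function with \(t_0\to\infty\) and \(t_0(s)\ge\tau_m\) whenever \(s\ge\sigma_m\).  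If \(t\colon L\to L\) is continuous, increasing, tends to~\(\infty\) and satisfies \(t\ge t_0\), then for each fixed~\(i\) and all \(s\ge\sigma_i\) (say \(s\in[\sigma_m,\sigma_{m+1}]\) with \(m\ge i\)) we get \(h_i(s,t(s))<1/m\le1/i\), so \(\norm{\xi_{s,t(s)}(g_i)-\bar\Xi_{t(s)}(g_i)}\to0\); a routine approximation argument over the dense sequence, using that \(\xi_{s,t(s)}\) is contractive and that~\(\bar\Xi\) is asymptotically linear and asymptotically contractive, upgrades this to \(\norm{\xi_{s,t(s)}(g)-\bar\Xi_{t(s)}(g)}\to0\) for all \(g\in S\hot A\).

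Finally, \(\bigl(\bar\Xi_{t(s)}\bigr)_{s\in L}\) is a reparametrisation of the asymptotic morphism~\(\bar\Xi\) and hence an asymptotic morphism in the same class; and a parameter-continuous family of linear contractions that is asymptotically equal to an asymptotic morphism is again an asymptotic morphism in the same class, since asymptotic multiplicativity, additivity and \(\ast\)\nb-compatibility transfer using the uniform bounds and the straight-line homotopy gives the equivalence.  Thus \(\bigl(\xi_{s,t(s)}\bigr)_{s\in L}\) is an asymptotic morphism in the class of~\(\Xi\), as claimed.  I expect the only real work to be the bookkeeping in the diagonal step; the substantive inputs — \(\pi\circ\xi_s\to\Xi\), the identification of the quotient norm with the \(\limsup\), and the equicontinuity coming from the piecewise-linear interpolation — are all routine, and I see no genuine obstacle.
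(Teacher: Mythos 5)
Your proof is correct and follows the same essential strategy as the paper — use $\pi\circ\xi_s = \Xi\circ\Psi_n^A \to \Xi$, identify the quotient norm in $\Pcomp(\Hils_BL)/\Comp(\Hils_BL)$ with a $\limsup$ of fibrewise norms, and diagonalise over a countable set and over parameters $(s,t)$ to build $t_0$ — but the implementation of the diagonalisation differs in two worthwhile ways. First, the paper uses a sequence $(f_i)$ with $\norm{f_i}\to 0$ whose closed linear span is $S\hot A$, and exploits this null condition to get estimates uniform in $i$ cheaply; you instead take a dense sequence $(g_i)$, control only $i\le m$ on each block $[\sigma_m,\sigma_{m+1}]$, and close with a density argument using that $\xi_{s,t(s)}$ is a genuine linear contraction and $\bar\Xi$ is asymptotically linear and contractive. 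Second, and more substantively, the paper implicitly assumes that the threshold $t(s,n)$ it extracts from the quotient-norm estimate can be taken bounded on each compact interval $[s_n,s_{n+1}]$ so that a continuous dominating $t_0$ exists; your argument makes this explicit via the equi-Lipschitz bound $\abs{h_i(s_1,t)-h_i(s_2,t)}\le 2\norm{g_i}\,\abs{s_1-s_2}$ (uniform in $t$) and Dini's theorem applied to $s\mapsto\sup_{t\ge T}h_i(s,t)\searrow\varepsilon(s,g_i)$. So your route spells out a point the paper glosses over, at the cost of a slightly heavier diagonalisation. Both are valid; no gap.
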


\begin{proof}
  Since \(S\hot A\) is separable, there is a sequence~\((f_i)\)
  whose closed linear span is \(S \hot A\).  For the
  asymptotic equality we need \(\pi\xi_{s,t(s)}(f_i) =
  \Xi_{t(s)}(f_i)\) for \(i\in\N\).  We have norm convergence
  \(\lim_{s\to\infty}\Psi^A_s(f_i)=f_i\) for all \(i\in\N\).  Since
  \(\norm{f_i}\to0\) and \(\Psi^A_s\) is uniformly bounded, this
  convergence is uniform.  Hence for each \(n\in\N\) there is \(s_n\in
  L\) such that \(\norm{\Psi^A_s(f_i)-f_i}<1/n\) for all \(s\ge s_n\),
  \(i\in\N\).  We may assume that the sequence~\((s_n)\) is strictly
  increasing with \(\lim_{n\to\infty} s_n=\infty\).

  Since \(\pi\circ\xi_s = \Xi\circ\Psi^A_s\) and~\(\Xi\) is a
  \Star{}homomorphism, we get \(\norm{\pi\circ\xi_s(f_i)-\Xi(f_i)}
  <1/n\) for all \(s\ge s_n\), \(i\in\N\).  By definition of the
  quotient norm in \(\Pcomp(\Hils_BL)/\Comp(\Hils_BL)\), we may find
  \(t_i(s,n)\in L\) with \(\norm{\xi_{s,t}(f_i) -
    \bar\Xi_t(f_i)}<1/n\) for \(s\ge s_n\), \(t\ge t_i(s,n)\).  Since
  \(\norm{f_i}\to0\) for \(i\to\infty\), there are only finitely
  many~\(i\) with \(\norm{\xi_{s,t}(f_i)}\ge 1/2n\) and
  \(\bar\Xi_t(f_i)\ge 1/2n\); hence we may find \(t(s,n)\) independent
  of~\(i\) with \(\norm{\xi_{s,t}(f_i) - \bar\Xi_t(f_i)}<1/n\) for all
  \(i\in\N\), \(s\ge s_n\), \(t\ge t(s,n)\).

  Now choose~\(t_0(s)\) increasing and continuous with
  \(\lim_{s\to\infty} t_0(s)=\infty\) and \(t_0(s)\ge t(s,n)\) for
  \(s\in [s_n,s_{n+1}]\).  If \(t(s)\ge t_0(s)\) for all \(s\in L\),
  then \(\norm{\xi_{s,t(s)}(f_i)-\bar\Xi_{t(s)}(f_i)}<1/n\) for all
  \(s\in [s_n,s_{n+1}]\) and all \(i\in\N\).  Thus \(\xi_{s,t(s)}\)
  and~\(\bar\Xi_{t(s)}\) are asymptotically equal.  This implies
  that~\(\xi_{s,t(s)}\) is an asymptotic morphism
  because~\(\bar\Xi_t\) is one.
\end{proof}

The asymptotic morphism~\(\xi_{s,t(s)}\) from \(S \hot A\) to
\(\Comp(\Hils_B)\) in Lemma~\ref{lem:asymptotically_mult} is also
linear, completely positive contractive and
\(G\times\Z/2\)\nb-equivariant.  Thomsen~\cite{Thomsen:Asymptotic_KK}
describes \(\KK^G(A,B)\) using asymptotic homomorphisms with these
extra properties.  We cannot directly appeal
to~\cite{Thomsen:Asymptotic_KK} because we have replaced the ungraded
suspension on both \(A\) and~\(B\) by the graded suspension~\(S\)
on~\(A\) alone.  It is well-known, however, that both approaches give
the same definition of equivariant \(E\)\nb-theory.  For the same
reason, both approaches with added complete positivity requirements
give \(\KK^G(A,B)\).  Let us make this more explicit.

An asymptotic morphism~\((\xi_t)\) from \(S \hot A\) to
\(\Comp(\Hils_B)\) gives an extension
\[
0 \to \Cont_0(L,\Comp(\Hils_B)) \to E \to
S \hot A \to 0,
\]
where \(E=\Cont_0(L,\Comp(\Hils_B))+\xi(S\hot A)\); it comes with
evaluation homomorphisms \(\epsilon_t\colon E\to\Comp(\Hils_B)\) for
\(t\in L\).  If the asymptotic morphism is
\(G\times\Z/2\)\nb-equivariant, completely positive and contractive,
then the extension above has a \(G\times\Z/2\)\nb-equivariant,
completely positive and contractive cross-section.  Hence there is a
long exact sequence in \(\KK^{G\times\Z/2}\) for this extension.
Since the kernel is contractible, we get that the quotient map in the
extension is invertible in \(\KK^{G\times\Z/2}\).  Composing its
inverse with the evaluation homomorphism, we get a class in
\[
\KK^{G\times\Z/2}(S \hot A,\Comp(\Hils_B))
\cong \KK^{G\times\Z/2}(S \hot A,B)
\cong \KK^G(A,B).
\]
Here we use a description of \(\KK^G\) for \(\Z/2\)-graded
\(\Cst\)\nb-algebras in terms of \(G\times\Z/2\)-equivariant Kasparov
theory that goes back to Haag in the non-equivariant case and is
extended to the equivariant case in~\cite{Meyer:Equivariant}.

Thus we attach a class in \(\KK^G(A,B)\) to a cycle for
\(\KE^G(A,B)\).  Since the same construction applies to homotopies,
this construction descends to a well-defined map \(\xi\colon
\KE^G(A,B)\to\KK^G(A,B)\).  By design, the composite map
\[
\KE^G(A,B)\to\KK^G(A,B)\to\E^G(A,B)
\]
is the functor~\(\Xi\) of~\cite{Dumitrascu:KE}.

The Kasparov product in~\(\KK^G\) becomes the composition of
completely positive equivariant asymptotic morphisms in the above
picture.  A composite of two completely positive equivariant
asymptotic morphisms is again completely positive and equivariant.
So the same argument as in~\cite{Dumitrascu:Thesis} shows
that~\(\xi\) is a functor.

\begin{proposition}
  The composite map
  \[
  \KK^G(A,B)\to\KE^G(A,B)\to\KK^G(A,B)
  \]
  is the identity on \(\KK^G(A,B)\).
\end{proposition}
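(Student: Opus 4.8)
The plan is to reduce the statement to the case of \Star{}homomorphisms and then to compute the construction of Section~\ref{sec:cp_am_from_KE} by hand on a very simple cycle. The transformation \(\KK^G\to\KE^G\) of Dumitra\c{s}cu and the transformation \(\xi\colon\KE^G\to\KK^G\) built above are both natural and multiplicative for the Kasparov product, so their composite~\(\Phi\) is an additive functor from \(\KK^G\) to itself which is the identity on objects. Every morphism of \(\KK^G\) is a composite of classes of \Star{}homomorphisms and of inverses of such classes (the universal characterisation of~\(\KK^G\); compare \cite{Meyer:Equivariant}), and~\(\Phi\), being a functor, commutes with composition and hence with inversion. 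It therefore suffices to prove \(\Phi([\psi])=[\psi]\) in \(\KK^G(A,B)\) for every grading-preserving \(G\)\nb-equivariant \Star{}homomorphism \(\psi\colon A\to B\).

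Fix such a~\(\psi\).  After stabilising \(A\) and~\(B\), which is harmless by naturality, the image of~\([\psi]\) in \(\KE^G(A,B)\) is represented by the cycle \((BL,\psi,0)\): the constant field with fibre~\(B\), representation~\(\psi\), and operator \(F=0\).  It satisfies (aKm1s), (aKm2), (aKm3s) and (aKm4s), all trivially because \(F=0\), and the construction of Section~\ref{sec:cp_am_from_KE} applies to it verbatim, with \(B=\Comp(B)\) in place of \(\Comp(\Hils_B)\).  The key point is that \(\psi_{n,0}(0)=1\) and \(\psi_{n,k}(0)=0\) for \(k\neq0\): in the Stinespring dilation of Lemma~\ref{lem:cp_am_graded} the auxiliary operator~\(x\) then contributes only through its \(k=0\) summand, on which it is~\(0\), and the isometry~\(V\) reduces to the single component \(V_0=\psi_{n,0}(0)=1\).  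A direct computation now gives \(\xi_n(f)=\psi\bigl((\mathrm{ev}_0\hot\ID_A)(f)\bigr)\) for every \(f\in S\hot A\) and every~\(n\), where \(\mathrm{ev}_0\colon S\to\C\) is evaluation at the grading-fixed point \(0\in(-1,1)\).  Hence the family \((\xi_{s,t})\) of Section~\ref{sec:cp_am_from_KE} is constant in both parameters and equals the genuine \(G\times\Z/2\)\nb-equivariant \Star{}homomorphism \(\beta\defeq\psi\circ(\mathrm{ev}_0\hot\ID_A)\colon S\hot A\to B\).  (If one insists on the standard module \(\Hils_B L\) with a degenerate summand \((\Hils_B L,0,F_0)\) added, the same collapse happens, since~\(\psi_{n,k}\) also vanishes at~\(\pm1\) and the symmetry~\(F_0\) is therefore annihilated by each~\(\psi_{n,k}\).)

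It remains to identify the \(\KK^G\)-class that Section~\ref{sec:cp_am_from_KE} assigns to the \Star{}homomorphism~\(\beta\).  As~\(\beta\) is a constant family of \Star{}homomorphisms, the extension \(0\to\Cont_0(L,B)\to E\to S\hot A\to0\), with \(E=\Cont_0(L,B)+\beta(S\hot A)\), is split by the \Star{}homomorphism \(x\mapsto(t\mapsto\beta(x))\); so the \(\KK^{G\times\Z/2}\)-inverse of the quotient map is the class of this splitting, and composing with an evaluation~\(\epsilon_t\) returns exactly \([\beta]\in\KK^{G\times\Z/2}(S\hot A,B)\).  Writing \([\beta]=[\mathrm{ev}_0\hot\ID_A]\cdot[\psi]\) and passing back to \(\KK^G(A,B)\) through the canonical identifications — stabilisation invariance and the Haag--Meyer isomorphism \(\KK^{G\times\Z/2}(S\hot A,B)\cong\KK^G(A,B)\), which sends \([\mathrm{ev}_0\hot\ID_A]\cdot[\psi]\) to~\([\psi]\) because the class of \(\mathrm{ev}_0\colon S\to\C\) is the one realising this isomorphism — we obtain \(\Phi([\psi])=[\psi]\); by the first paragraph, \(\Phi\) is the identity.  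The one step that calls for genuine care, as opposed to bookkeeping, is this last identification: one must check that, with the grading on~\(S\) normalised as in Section~\ref{sec:cp_am_from_KE}, evaluation at the fixed point of the grading is precisely the class implementing the Haag--Meyer description of~\(\KK^G\) for \(\Z/2\)-graded \(\Cst\)\nb-algebras in \cite{Meyer:Equivariant}.
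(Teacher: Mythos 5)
Your proof is correct and follows essentially the same route as the paper's: reduce to the class of a \Star{}homomorphism using that the composite is a functor on~\(\KK^G\) together with the Cuntz-picture fact that every \(\KK^G\)-class is a composite of classes of \Star{}homomorphisms and their inverses, then verify the base case. The paper states this base case (for \(f\colon S\hot A\to B\) in the Haag--Meyer picture) with the word ``clearly'', whereas you supply the explicit computation — \(\psi_{n,0}(0)=1\), \(\psi_{n,k}(0)=0\) for \(k\neq0\), collapsing the Stinespring dilation to \(\xi_n=\psi\circ(\mathrm{ev}_0\hot\ID_A)\) — and honestly flag that the remaining content is the identification of \(\mathrm{ev}_0\) with the Haag--Meyer equivalence.
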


\begin{proof}
  This clearly holds on the class in \(\KK^G(A,B)\) of a
  grading-preserving equivariant \Star{}homomorphism \(f\colon S\hot
  A\to B\).  If this~\(f\) is a \(\KK^G\)-equivalence,
  then~\([f]^{-1}\) is mapped to~\([f]^{-1}\) as well by
  functoriality.  Hence any composite of such classes is mapped to
  itself by functoriality.  Any class in \(\KK^G\) may be written as
  such a composition of classes of \([f]\) and~\([f]^{-1}\).  This
  follows from the Cuntz picture for \(\KK^G(A,B)\cong
  \KK^{G\times\Z/2}(S\hot A,B)\) in~\cite{Meyer:Equivariant}.
\end{proof}

\setlength{\emergencystretch}{3em}
\begin{bibdiv}
  \begin{biblist}
\bib{Connes-Higson:Deformations}{article}{
  author={Connes, Alain},
  author={Higson, Nigel},
  title={D\'eformations, morphismes asymptotiques et $K$\nobreakdash -th\'eorie bivariante},
  journal={C. R. Acad. Sci. Paris S\'er. I Math.},
  volume={311},
  date={1990},
  number={2},
  pages={101--106},
  issn={0764-4442},
  review={\MRref {1065438}{91m:46114}},
}

\bib{Dumitrascu:Thesis}{thesis}{
  author={Dumitra\c {s}cu, Dorin},
  title={A new approach to bivariant K\nobreakdash -theory},
  date={2001},
  institution={Pennsylvania State University},
  type={phdthesis},
  eprint={https://etda.libraries.psu.edu/paper/5876/},
}

\bib{Dumitrascu:KE}{article}{
  author={Dumitra\c {s}cu, Dorin},
  title={On an intermediate bivariant $K$\nobreakdash -theory for $C^*$\nobreakdash -algebras},
  date={2015},
  status={preprint},
}

\bib{Guentner-Higson-Trout:Equivariant_E}{article}{
  author={Guentner, Erik P.},
  author={Higson, Nigel},
  author={Trout, Jody},
  title={Equivariant $E$\nobreakdash -theory for $C^*$\nobreakdash -algebras},
  journal={Mem. Amer. Math. Soc.},
  volume={148},
  date={2000},
  number={703},
  pages={viii+86},
  issn={0065-9266},
  review={\MRref {1711324}{2001c:46124}},
  doi={10.1090/memo/0703},
}

\bib{Kasparov:Operator_K}{article}{
  author={Kasparov, Gennadi G.},
  title={The operator \(K\)\nobreakdash -functor and extensions of \(C^*\)\nobreakdash -algebras},
  journal={Izv. Akad. Nauk SSSR Ser. Mat.},
  volume={44},
  date={1980},
  number={3},
  pages={571--636, 719},
  issn={0373-2436},
  translation={ journal={Math. USSR-Izv.}, volume={16}, date={1981}, number={3}, pages={513--572}, doi={10.1070/IM1981v016n03ABEH001320}, },
  review={\MRref {582160}{81m:58075}},
  eprint={http://mi.mathnet.ru/izv1739},
}

\bib{Kasparov:Novikov}{article}{
  author={Kasparov, Gennadi G.},
  title={Equivariant \(KK\)-theory and the Novikov conjecture},
  journal={Invent. Math.},
  volume={91},
  date={1988},
  number={1},
  pages={147--201},
  issn={0020-9910},
  review={\MRref {918241}{88j:58123}},
  doi={10.1007/BF01404917},
}

\bib{Meyer:Equivariant}{article}{
  author={Meyer, Ralf},
  title={Equivariant Kasparov theory and generalized homomorphisms},
  journal={\(K\)\nobreakdash -Theory},
  volume={21},
  date={2000},
  number={3},
  pages={201--228},
  issn={0920-3036},
  review={\MRref {1803228}{2001m:19013}},
  doi={10.1023/A:1026536332122},
}

\bib{Thomsen:Asymptotic_KK}{article}{
  author={Thomsen, Klaus},
  title={Asymptotic homomorphisms and equivariant $KK$-theory},
  journal={J. Funct. Anal.},
  volume={163},
  date={1999},
  number={2},
  pages={324--343},
  issn={0022-1236},
  review={\MRref {1680467}{2000c:19008}},
  doi={10.1006/jfan.1998.3377},
}

  \end{biblist}
\end{bibdiv}
\end{document}